 \newtheorem{thm}{Theorem}[section]
 \newtheorem{cor}[thm]{Corollary}
 \newtheorem{lem}[thm]{Lemma}
 \newtheorem{prop}[thm]{Proposition}
 \theoremstyle{definition}
 \theoremstyle{remark}
 \newtheorem{rem}[thm]{Remark}
 \numberwithin{equation}{section}
\begin{document}

\title[CATEGORY OF THE CONFIGURATION SPACE]
{LUSTERNIK-SCHNIRELMANN CATEGORY OF THE CONFIGURATION SPACE OF COMPLEX PROJECTIVE SPACE }

\author{Cesar A. Ipanaque Zapata }
\date{} 
\address[Cesar A. Ipanaque Zapata]{Deparatmento de Matem\'{a}tica,UNIVERSIDADE DE S\~{A}O PAULO
INSTITUTO DE CI\^{E}NCIAS MATEM\'{A}TICAS E DE COMPUTA\c{C}\~{A}O -
USP , Avenida Trabalhador S\~{a}o-carlense, 400 - Centro CEP:
13566-590 - S\~{a}o Carlos - SP, Brasil}
\email{cesarzapata@usp.br}

\maketitle

\begin{abstract}

The Lusternik-Schnirelmann category $cat(X)$ is a homotopy invariant which is a numerical bound on the number of critical points of a smooth function on a manifold. Another similar invariant is the topological complexity $TC(X)$ (a la Farber) which has interesting applications in Robotics, specifically, in the robot motion planning problem. In this paper we calculate the Lusternik-Schnirelmann category and as a consequence we calculate the topological complexity of the two-point ordered configuration space of $\mathbb{CP}^n$ for every $n\geq 1$. 
    
\end{abstract}

\textbf{Keywords:} Lusternik-Schnirelmann category, Topological complexity, configuration space, Complex projective space.

\section{INTRODUCTION}

The ordered configuration space of $k$ distinct points of a topological space $X$ (see \cite{fadell1962configuration}) is the subset 
 \[F(X,k)=\{(x_1,\cdots,x_k)\in X^k\mid ~~x_i\neq x_j\text{ for all } i\neq j \}\] topologised, as a subspace of the Cartesian power $X^k$. This space has been used in robotics when one controls multiple objects simultaneously, trying to avoid collisions between them \cite{farber2008invitation}. 

The first definition of category was given by Lusternik and Schnirelmann \cite{lusternik}. Their definition was a consequence of an investigation to obtain numerical bounds for the number of critical points of a smooth function on a manifold.

Here we follow a definition of category, one greater than category given in  \cite{cornea2003lusternik}. We say that the Lusternik-Schnirelmann category or category of a topological space $X$, denoted $cat(X)$, is the least integer $m$ such that $X$ can be covered with $m$ open sets, which are all contractible within $X$.  One of the basic properties of $cat(X)$
is its homotopy invariance (\cite{cornea2003lusternik}, Theorem 1.30).

Proposition \ref{prop 1} belows gives the general lower and upper bound of the category of a space $X$:


\begin{prop}\label{prop 1}
\begin{enumerate}
    \item[(1)] (\cite{roth2008category}, Section 2, Proposition 2.1-5), pg. 451) If $X$ is an $(n-1)-$connected CW-complex, then \[cat(X)\leq \frac{dim(X)}{n}+1.\] 
    \item[(2)] Let $R$ be a commutative ring with unit and $X$ be a space. We have \[1+cup_R(X)\leq cat(X)\] where  $cup_R(X)$ is the least integer $n$ such that all $(n+1)-$fold cup products vanish in the reduced cohomology $\widetilde{H^\star}(X;R)$ (\cite{cornea2003lusternik}, Theorem 1.5, pg. 2).
\end{enumerate}
\end{prop}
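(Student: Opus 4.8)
The plan is to treat the two inequalities by entirely separate and classical mechanisms: the lower bound (2) comes from the behaviour of cup products against a categorical open cover, while the upper bound (1) comes from obstruction theory applied to Whitehead's reformulation of category via the fat wedge, together with the homotopy invariance of $cat$ recorded above.

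For (2), I would start from the defining cover. Suppose $cat(X)=m$ and fix a cover $X=U_1\cup\cdots\cup U_m$ by open sets each contractible in $X$. The key observation is that if $U\subset X$ is contractible in $X$ then the inclusion $U\hookrightarrow X$ is null-homotopic, so for every $u\in\widetilde{H}^\star(X;R)$ the restriction $u|_U$ vanishes; by the long exact sequence of the pair $(X,U)$ the class $u$ therefore lifts to a relative class $\bar u\in H^\star(X,U;R)$ mapping to $u$ under $H^\star(X,U;R)\to H^\star(X;R)$. Given positive-degree classes $u_1,\ldots,u_m\in\widetilde{H}^\star(X;R)$, I choose such lifts $\bar u_i\in H^\star(X,U_i;R)$ and form the relative cup product
\[
\bar u_1\smile\cdots\smile\bar u_m\in H^\star\!\left(X,\,U_1\cup\cdots\cup U_m;\,R\right)=H^\star(X,X;R)=0.
\]
Mapping this to absolute cohomology shows $u_1\smile\cdots\smile u_m=0$. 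Hence every $m$-fold product of reduced classes vanishes, so $cup_R(X)\le m-1$, i.e. $1+cup_R(X)\le cat(X)$. The only point requiring care is the existence and naturality of the relative cup product $H^\star(X,A;R)\otimes H^\star(X,B;R)\to H^\star(X,A\cup B;R)$, which is standard for an open (hence excisive) cover.

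For (1), I would first use homotopy invariance to replace $X$ by a CW model with a single $0$-cell and cells only in dimensions $0$ and $n,n+1,\ldots,d$, where $d=\dim(X)$; this is possible because $X$ is $(n-1)$-connected, and for $n=1$ the claim is just the universal bound $cat(X)\le\dim(X)+1$. Recall Whitehead's criterion: in the present (unnormalised) convention, $cat(X)\le m$ if and only if the iterated diagonal $\Delta\colon X\to X^m$ lifts, up to homotopy, into the fat wedge $T^m(X)=\{(x_1,\ldots,x_m)\colon x_i=\ast\text{ for some }i\}\subset X^m$. Converting $T^m(X)\hookrightarrow X^m$ into a fibration and pulling back along $\Delta$, a lift is exactly a section over $X$, which I produce by obstruction theory. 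The relative cells of $(X^m,T^m(X))$ are products of $m$ positive-dimensional cells, so the lowest has dimension $\ge mn$ and the pair is $(mn-1)$-connected; hence the homotopy fibre $F$ of the inclusion is $(mn-2)$-connected. The obstructions to a section over the $d$-dimensional complex $X$ lie in $H^{s+1}(X;\pi_s(F))$, which vanish unless $s\le d-1$ and $s\ge mn-1$ hold simultaneously. Choosing $m$ with $mn>d$ leaves no such $s$, so the section exists and $cat(X)\le m$; the least such integer is $\lfloor d/n\rfloor+1\le d/n+1$, giving the stated bound.

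The main obstacle in both arguments is book-keeping rather than ideas. In (2) one must ensure the chosen relative lifts assemble so that the relative product genuinely lands in $H^\star\!\left(X,\bigcup_i U_i\right)$, which is exactly where the open-cover hypothesis and excision enter. In (1) the genuinely delicate steps are the connectivity estimate for the fat-wedge pair $(X^m,T^m(X))$ — this is what converts the hypothesis \emph{$(n-1)$-connected} into the numerator $n$ — and keeping the index shifts between the connectivity of the pair, the connectivity of the fibre, and the degree of the obstruction groups consistent, so that the inequality $mn>d$ emerges cleanly as the condition killing every obstruction.
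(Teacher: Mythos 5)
Your proposal is correct, but note that the paper does not actually prove this proposition: both parts are quoted from the literature (part (1) from Roth, part (2) from Cornea--Lupton--Oprea--Tanr\'e), so the honest comparison is with the proofs behind those citations. Your argument for (2) --- lift each reduced class to $H^{\star}(X,U_i;R)$ using that $U_i\hookrightarrow X$ is null-homotopic, then multiply the lifts into $H^{\star}(X,U_1\cup\dots\cup U_m;R)=H^{\star}(X,X;R)=0$ via the relative cup product for an excisive (open) family --- is precisely the classical proof given in the cited book. Your argument for (1), via Whitehead's fat-wedge characterization of $cat$ plus obstruction theory, is a correct classical proof as well; the proof behind the citation runs the same obstruction calculus against the Ganea fibration $G_m(X)\to X$, whose fibre is a join of copies of $\Omega X$ and has exactly the connectivity you compute for the fibre of $T^m(X)\hookrightarrow X^m$ (namely $(mn-2)$-connected), so both routes reduce the bound to the same inequality $mn>\dim(X)$; the two characterizations of $cat$ are equivalent and nothing is gained or lost either way.

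Two small points in (1) deserve care but are not gaps. First, the existence of a CW model with cells only in dimensions $0$ and $n,\dots,d$ rests on cell trading, which can push a traded cell up to dimension $n+1$; this is only an issue when $d\leq n+1$, and in the extreme case $d=n$ the space is a wedge of $n$-spheres (or contractible), where the bound is immediate. Alternatively, use the model with no cells in dimensions $1,\dots,n-1$ (of whatever dimension) only for the connectivity estimate of the fibre, and run the obstruction argument over the original $X$, whose cohomology vanishes above $d$ independently of the model. Second, your separate treatment of $n=1$ is indeed necessary, since for $\pi_1(X)\neq 0$ the obstruction groups would need local coefficients; reducing that case to the universal bound $cat(X)\leq \dim(X)+1$ is the right move.
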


On the other hand we recall the definition of topological complexity (see \cite{farber2003topological} for more details).  The \textit{Topological complexity} of a path-connected space $X$ is the least integer $m$ such that the Cartesian product $X\times X$ can be covered with $m$ open subsets $U_i$, \begin{equation*}
        X \times X = U_1 \cup U_2 \cup\cdots \cup U_m 
    \end{equation*} such that for any $i = 1, 2, \ldots , m$ there exists a continuous function $s_i : U_i \longrightarrow PX$, $\pi\circ s_i = id$ over $U_i$. If no such $m$ exists we will set $TC(X)=\infty$. Where $PX$ denote the space of all continuous paths $\gamma: [0,1] \longrightarrow X$ in $X$ and  $\pi: PX \longrightarrow X \times X$ denotes the
map associating to any path $\gamma\in PX$ the pair of its initial and end points $\pi(\gamma)=(\gamma(0),\gamma(1))$. Equip the path space $PX$ with the compact-open topology. 

The central motivating result of this paper is the Lusternik-Schnirelmann category of the configuration space of $2$ distinct points in Complex Projective $n-$space for all $n\geq 1$,

 \begin{thm}\label{theor}  For $n\geq 1$,
\[cat(F(\mathbb{CP}^n,2))=2n.\]
 \end{thm}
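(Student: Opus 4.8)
The plan is to sandwich $cat(F(\mathbb{CP}^n,2))$ between $2n$ and $2n$ using the two halves of Proposition \ref{prop 1}: the cup-length lower bound of part (2), and the dimension--connectivity upper bound of part (1).

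For the lower bound I would first extract enough of the cohomology ring of $F=F(\mathbb{CP}^n,2)$. Write $a_1,a_2\in H^2(F;\mathbb{Z})$ for the restrictions of the two canonical generators of $H^*(\mathbb{CP}^n\times\mathbb{CP}^n;\mathbb{Z})=\mathbb{Z}[a_1,a_2]/(a_1^{n+1},a_2^{n+1})$ along the inclusion $\iota\colon F\hookrightarrow \mathbb{CP}^n\times\mathbb{CP}^n$. Since $F$ is the complement of the diagonal $\Delta\cong\mathbb{CP}^n$, whose normal bundle is $T\mathbb{CP}^n$ of real codimension $2n$, I would invoke the Thom--Gysin long exact sequence of the pair, in which the image of the transfer $H^{*-2n}(\Delta)\to H^*(\mathbb{CP}^n\times\mathbb{CP}^n)$ is, by the projection formula together with the surjectivity of the diagonal restriction $\delta^*$, exactly the ideal generated by the Poincar\'e dual $\Delta^*=\sum_{i=0}^n a_1^i a_2^{n-i}$. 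Hence $\operatorname{im}(\iota^*)$ is the subring $A=\mathbb{Z}[a_1,a_2]/(a_1^{n+1},a_2^{n+1},\Delta^*)$ of $H^*(F)$. The decisive computation is then that $a_1^n a_2^{n-1}\neq 0$ in $A$: in degree $4n-2$ the ambient group is free of rank two on $a_1^na_2^{n-1}$ and $a_1^{n-1}a_2^n$, while every relation $\Delta^*\cdot a_1^pa_2^q$ with $p+q=n-1$ collapses to the single element $a_1^na_2^{n-1}+a_1^{n-1}a_2^n$, leaving $A^{4n-2}$ free of rank one. Thus $\iota^*(a_1)^n\,\iota^*(a_2)^{n-1}\neq 0$ exhibits a nonzero $(2n-1)$-fold cup product, so $cup_{\mathbb{Z}}(F)\geq 2n-1$ and Proposition \ref{prop 1}(2) yields $cat(F)\geq 2n$.

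For the upper bound I would use the Fadell--Neuwirth fibration obtained by forgetting the second point,
$$ \mathbb{CP}^n\setminus\{pt\}\;\longrightarrow\; F(\mathbb{CP}^n,2)\;\stackrel{p}{\longrightarrow}\;\mathbb{CP}^n, $$
whose fiber is homotopy equivalent to $\mathbb{CP}^{n-1}$. From the homotopy exact sequence the fundamental groups of fiber and base both vanish (for $n=1$ the fiber is contractible and $F(\mathbb{CP}^1,2)\simeq S^2$), so $F$ is simply connected. Moreover $F$ is a connected non-compact $4n$-manifold, being the complement of the nonempty closed diagonal in the closed manifold $\mathbb{CP}^n\times\mathbb{CP}^n$; hence it has the homotopy type of a CW complex of dimension at most $4n-1$. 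Feeding $\dim=4n-1$ and $1$-connectedness into Proposition \ref{prop 1}(1) gives $cat(F)\leq \tfrac{4n-1}{2}+1=2n+\tfrac12$, and since $cat$ is an integer, $cat(F)\leq 2n$. Together with the lower bound this proves the theorem.

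I expect the main obstacle to be the lower bound: correctly setting up the Thom--Gysin sequence for the complement of the diagonal, identifying $\ker(\iota^*)$ with the ideal $(\Delta^*)$, and carrying out the degree-$(4n-2)$ bookkeeping that shows $a_1^n a_2^{n-1}$ survives. The upper bound is comparatively routine once one accepts the classical fact that a non-compact manifold has homotopy dimension strictly below its dimension; alternatively one can read the sharper bound $4n-2$ directly off the fibration, which also forces $cat(F)\le 2n$.
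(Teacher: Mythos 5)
Your argument is correct, and both halves take genuinely different routes from the paper's. For the lower bound, the paper simply quotes Sohail's computation of the full ring $H^\star(F(\mathbb{CP}^n,2);\mathbb{C})\cong\mathbb{C}[a_1,a_2]/\langle r_n(a_1,a_2),a_1^{n+1},a_2^{n+1}\rangle$ and reads off $a_1^{n-1}a_2^n\neq 0$, giving $cup_{\mathbb{C}}(F(\mathbb{CP}^n,2))=2n-1$ (Lemma \ref{lem3}); you instead rederive exactly the piece that is needed, namely the image of $\iota^*$, from the Thom--Gysin sequence of the complement of the diagonal, arriving at the same relation ideal $\bigl(a_1^{n+1},a_2^{n+1},\sum_{i}a_1^ia_2^{n-i}\bigr)$ and the same nonvanishing $(2n-1)$-fold product (your $a_1^na_2^{n-1}$ is the paper's $a_1^{n-1}a_2^n$ up to swapping the factors). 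This is self-contained, works over $\mathbb{Z}$ rather than $\mathbb{C}$, and replaces a citation by a standard Gysin computation. For the upper bound, the paper runs the Leray--Serre spectral sequence of the Fadell--Neuwirth fibration, observes that it collapses, and invokes Hatcher's Proposition 4C.1 to obtain a CW model of dimension exactly $4n-2$ (Corollary \ref{coro2}), whence $cat\leq (4n-2)/2+1=2n$; you avoid computing homology altogether by citing the classical fact that a connected non-compact smooth $4n$-manifold has the homotopy type of a CW complex of dimension at most $4n-1$, and then integrality of $cat$ sharpens $(4n-1)/2+1=2n+\frac{1}{2}$ to $2n$. That integrality observation is precisely what lets the crude dimension bound suffice. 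Two caveats: the open-manifold fact, while classical, is itself nontrivial (in the smooth case it comes from handle decompositions with no top-index handles), so it needs a genuine citation; and your aside that the sharper bound $4n-2$ can be ``read directly off the fibration'' is not automatic for a fibration whose fiber is merely homotopy equivalent to $\mathbb{CP}^{n-1}$ --- the clean fix is to note that $F(\mathbb{CP}^n,2)$ deformation retracts fiberwise (retract $y$ away from $x$ onto the hyperplane orthogonal to $x$) onto the closed $(4n-2)$-manifold $\{(x,y): y\perp x\}$. Net comparison: the paper's route also yields the explicit homology and cell structure, which is of independent interest; yours proves the theorem with integer coefficients, no spectral sequences, and no appeal to the published ring computation.
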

 
As an application we have the following statement.
 
 \begin{cor}\label{coro}
  For $n\geq 1$,
\[TC(F(\mathbb{CP}^n,2))=4n-1.\]
 \end{cor}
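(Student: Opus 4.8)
The plan is to prove $TC(F(\mathbb{CP}^n,2))=4n-1$ by establishing the two inequalities separately, using Theorem \ref{theor} for the upper bound and the cohomology ring of the configuration space for the lower bound. For the upper bound I would invoke the standard relations of Farber between topological complexity and category, namely $TC(X)\le cat(X\times X)$ together with the product inequality $cat(X\times X)\le 2\,cat(X)-1$ (the latter being the subadditivity of category in the normalised convention used here). Combining these with Theorem \ref{theor} gives at once
\[
TC(F(\mathbb{CP}^n,2))\le 2\cdot 2n-1=4n-1 .
\]
(Alternatively, feeding $\dim=4n-2$ and the simple-connectivity of the closed homotopy model into the dimension--connectivity bound for $TC$ produces the same estimate.)

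The substance lies in the lower bound, for which I would use the zero-divisor cup-length estimate $TC(X)\ge zcl_R(X)+1$, where $zcl_R(X)$ is the length of the longest nonzero product of elements in the kernel of the cup product $H^\star(X;R)\otimes H^\star(X;R)\to H^\star(X;R)$. The first step is to record the ring $H^\star(F(\mathbb{CP}^n,2);\mathbb{Z})$. Using the Fadell--Neuwirth fibration $\mathbb{CP}^{n-1}\simeq \mathbb{CP}^n\setminus\{pt\}\to F(\mathbb{CP}^n,2)\to\mathbb{CP}^n$ (equivalently, identifying $F(\mathbb{CP}^n,2)$ up to homotopy with the projectivisation of the orthogonal complement of the tautological line bundle over $\mathbb{CP}^n$), the Leray--Hirsch theorem yields
\[
H^\star(F(\mathbb{CP}^n,2);\mathbb{Z})\cong\frac{\mathbb{Z}[\alpha,\beta]}{\big(\alpha^{n+1},\ \textstyle\sum_{i=0}^{n}\alpha^{i}\beta^{\,n-i}\big)},\qquad \deg\alpha=\deg\beta=2,
\]
where $\alpha,\beta$ are the pullbacks of the hyperplane class under the two projections. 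From the relations one extracts the facts that drive the computation: $\alpha^{n+1}=\beta^{n+1}=0$, the vanishing $\alpha^n\beta^{\,n}=0$ (which is exactly the incarnation of the diagonal relation, since $\alpha^n$ times the second relation leaves only $\alpha^n\beta^n$), and the fact that the top class is $\mu:=\alpha^n\beta^{\,n-1}$, a generator of $H^{4n-2}\cong\mathbb{Z}$.

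Writing $\bar\alpha:=\alpha\otimes 1-1\otimes\alpha$ and $\bar\beta:=\beta\otimes 1-1\otimes\beta$ (both zero-divisors), I would then exhibit a nonzero product of $4n-2$ of them. Because $\alpha^{n+1}=0$, the expansion of $\bar\alpha^{2n}$ collapses to the single surviving term $\binom{2n}{n}(-1)^n\,\alpha^n\otimes\alpha^n$; multiplying by $\bar\beta^{2n-2}$ and invoking $\alpha^n\beta^{\,j}=0$ for $j\ge n$ annihilates every summand except the one with $j=n-1$, leaving
\[
\bar\alpha^{2n}\bar\beta^{2n-2}=-\binom{2n}{n}\binom{2n-2}{n-1}\,\mu\otimes\mu\neq 0 .
\]
Hence $zcl_{\mathbb{Z}}(F(\mathbb{CP}^n,2))\ge 2n+(2n-2)=4n-2$, so $TC(F(\mathbb{CP}^n,2))\ge 4n-1$, and together with the upper bound this forces equality.

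The main obstacle I anticipate is not any single inequality but the cohomological bookkeeping: correctly deriving the ring structure (in particular the vanishing $\alpha^n\beta^n=0$) and checking that, after reducing each tensor factor to the monomial basis $\{\alpha^j\beta^i\}$, the coefficient of $\mu\otimes\mu$ genuinely survives rather than cancels against other reductions. I would verify the extreme case $n=1$, where $F(\mathbb{CP}^1,2)\simeq S^2$ and the formula correctly returns $TC=3$ (with $\bar\alpha^2=-2\,\alpha\otimes\alpha\neq 0$), as a sanity test on the signs and binomial coefficients before trusting the general computation.
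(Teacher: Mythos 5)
Your proposal is correct and takes essentially the same approach as the paper: the upper bound via $TC(F(\mathbb{CP}^n,2))\leq 2\,cat(F(\mathbb{CP}^n,2))-1=4n-1$ from Theorem \ref{theor} and Farber's inequality, and the lower bound via a zero-divisor cup-length computation of total length $4n-2$ in the same cohomology ring. The only differences are cosmetic: you distribute the zero-divisor factors as $\bar\alpha^{2n}\bar\beta^{2n-2}$ where the paper uses the symmetric powers $(1\otimes a_1-a_1\otimes 1)^{2n-1}(1\otimes a_2-a_2\otimes 1)^{2n-1}$, and you work over $\mathbb{Z}$ rather than $\mathbb{C}$ (note that Farber's zero-divisor theorem is stated for field coefficients, but your integral computation clearly survives rationally), with both computations arriving at a nonzero multiple of the top class tensored with itself.
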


 \section{PROOF}
 In this section we proof Theorem \ref{theor} and Corollary \ref{coro}. We begin by proving two lemmas needed for our proofs.
 
 \begin{lem} 
For $n\geq 1$, \[H_q(F(\mathbb{CP}^n,2);\mathbb{Z})=\left\{
  \begin{array}{ll}
    \mathbb{Z}^{\oplus (\frac{q}{2}+1)}, & \hbox{$q=0,2,4,\cdots,2(n-1)$;} \\
    \mathbb{Z}^{\oplus (2n-\frac{q}{2})} , & \hbox{$q=2n,2n+2,2n+4,\cdots,2n+2(n-1)$;} \\
    0, & \hbox{otherwise.}
  \end{array}
\right.
\] 
 \end{lem}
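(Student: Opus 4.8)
The plan is to compute the homology of $F(\mathbb{CP}^n,2)$ by exhibiting it as a fiber bundle and running the Leray--Hirsch theorem or a Serre spectral sequence argument. The key structural fact is the projection $p\colon F(\mathbb{CP}^n,2)\to \mathbb{CP}^n$ onto the first coordinate, $(x_1,x_2)\mapsto x_1$. The fiber over a point $x_1$ is $\mathbb{CP}^n\setminus\{x_1\}$, the complement of a point in complex projective space. First I would identify this fiber up to homotopy: removing a point from $\mathbb{CP}^n$ deformation retracts onto the hyperplane $\mathbb{CP}^{n-1}$ at infinity, so the fiber is homotopy equivalent to $\mathbb{CP}^{n-1}$. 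Thus $F(\mathbb{CP}^n,2)$ is the total space of a fibration with base $\mathbb{CP}^n$ and fiber homotopy equivalent to $\mathbb{CP}^{n-1}$.

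Next I would analyze the Serre spectral sequence of this fibration. The $E_2$ page is $E_2^{p,q}=H_p(\mathbb{CP}^n; H_q(\mathbb{CP}^{n-1}))$, and since both base and fiber have free homology concentrated in even degrees, the $E_2$ page is concentrated in bidegrees $(p,q)$ with both $p$ and $q$ even. Because differentials $d_r$ shift the total degree by $-1$ and change parities appropriately, every differential must vanish: a differential out of an even-even entry lands in an entry of odd total degree, where $E_2$ is zero. Hence the spectral sequence collapses at $E_2$, and the homology of the total space is given additively by the tensor product $H_*(\mathbb{CP}^n)\otimes H_*(\mathbb{CP}^{n-1})$ (equivalently by Leray--Hirsch, which applies since the classes pull back appropriately). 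Concretely, $H_q(F(\mathbb{CP}^n,2))$ is free abelian of rank equal to the number of pairs $(2a,2b)$ with $0\le a\le n$, $0\le b\le n-1$, and $2a+2b=q$.

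Finally I would convert this rank count into the stated piecewise formula. For $q=2m$ the rank is the number of integer pairs $(a,b)$ with $a+b=m$, $0\le a\le n$, $0\le b\le n-1$. When $0\le m\le n-1$ the constraint $a\le n$ is automatic and the count is $m+1=\frac{q}{2}+1$, matching the first case. When $n\le m\le 2n-1$ the binding constraints give a count of $2n-m=2n-\frac{q}{2}$, matching the second case; and the homology vanishes in odd degrees and above degree $2(2n-1)$, giving the third case. The main obstacle I anticipate is justifying rigorously that the bundle $p$ is genuinely a fibration (or at least fibration-like enough for the spectral sequence) and confirming the collapse: the parity argument for collapse is clean, but one should verify that no subtlety in the local coefficient system arises — i.e.\ that $\pi_1(\mathbb{CP}^n)=0$ forces trivial coefficients, which it does since $\mathbb{CP}^n$ is simply connected. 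With trivial coefficients and the parity-based collapse, the computation is forced.
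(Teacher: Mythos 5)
Your proposal is correct and follows essentially the same route as the paper: the Fadell--Neuwirth fibration $F(\mathbb{CP}^n,2)\to\mathbb{CP}^n$ with fiber homotopy equivalent to $\mathbb{CP}^{n-1}$, followed by the Leray--Serre spectral sequence, whose $E^2$-term $H_p(\mathbb{CP}^n;\mathbb{Z})\otimes H_q(\mathbb{CP}^{n-1};\mathbb{Z})$ collapses. In fact you supply details the paper leaves implicit (the even-degree parity argument forcing all differentials to vanish, simple connectivity of the base giving trivial coefficients, and the identification of the fiber via the deformation retraction onto the hyperplane at infinity), so the argument is complete.
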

 \begin{proof}
   By the Leray-Serre espectral sequence (\cite{mccleary2001user}, Theorem 5.4, pg. 139) of the fibration $F(\mathbb{CP}^n,2)\longrightarrow \mathbb{CP}^n,~(x,y)\mapsto x$ with fibre $\mathbb{CP}^{n-1}$ (\cite{fadell1962configuration}, Theorem 1), we have the $E^2-$term\[E^2_{p,q}=H_p(\mathbb{CP}^n;\mathbb{Z})\otimes H_q(\mathbb{CP}^{n-1};\mathbb{Z}) \] and all those differentials are zero (see Figure \ref{seq}). So this Lemma follows.
    \end{proof}
    
    \begin{figure}[!h]
 \caption{$E^2-$term.}
 \label{seq}
 \centering
 \includegraphics[scale=0.5]{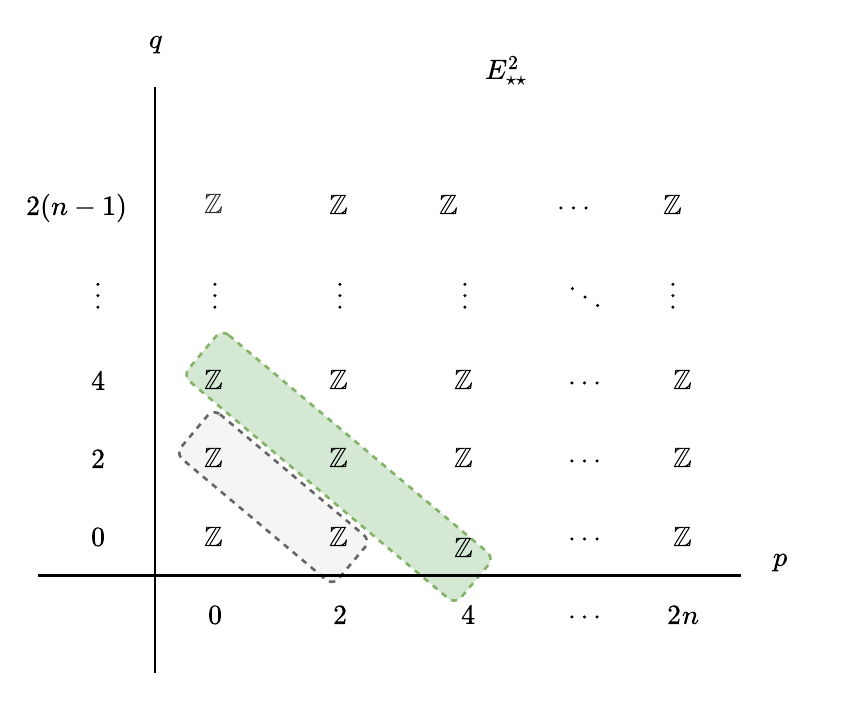}
\end{figure} 
 
We recall that $F(\mathbb{CP}^n,2)$ is simply-connected, since $\mathbb{CP}^n$ and $\mathbb{CP}^{n-1}$ are. By (\cite{hatcher2002algebraic}, Proposition 4C.1) we have:
 
 \begin{cor}\label{coro2}
The configuration space of complex projective space $F(\mathbb{CP}^n,2)$ has the homotopy type of a CW complex which has $j+1$\quad $2j-$cells ($j=0,1,\ldots ,n-1$),  and $2n-j$\quad $2j-$cells ($j=n,n+1,n+2,\ldots,n+(n-1)$). In particular,  $F(\mathbb{CP}^n,2)$ has the homotopy type of a $2(2n-1)-$dimensional finite  CW complex. 
 \end{cor}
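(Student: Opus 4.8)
The plan is to read off the conclusion directly from the preceding Lemma together with the cited minimal cell structure theorem, so almost all of the work is bookkeeping. First I would isolate the two facts that make Hatcher's Proposition 4C.1 applicable. On one hand, $F(\mathbb{CP}^n,2)$ is simply-connected: this follows from the long exact homotopy sequence of the fibration $F(\mathbb{CP}^n,2)\to\mathbb{CP}^n$ with fibre $\mathbb{CP}^{n-1}$ used in the Lemma, since both base and fibre are simply-connected. On the other hand, the Lemma shows that every group $H_q(F(\mathbb{CP}^n,2);\mathbb{Z})$ is a finitely generated free abelian group, and that these groups vanish in all odd degrees.

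Second, with these hypotheses recorded, I would invoke Hatcher Proposition 4C.1, which asserts that a simply-connected CW complex with free homology is homotopy equivalent to a CW complex possessing exactly $\operatorname{rank}H_q(\,\cdot\,;\mathbb{Z})$ cells in each dimension $q$. Because the odd homology vanishes, the model produced has no odd-dimensional cells; all cells occur in even dimensions.

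Third, it remains to transcribe the ranks computed in the Lemma into cell counts. For $j=0,1,\ldots,n-1$ we have $\operatorname{rank}H_{2j}=\tfrac{2j}{2}+1=j+1$, giving $j+1$ cells of dimension $2j$; for $j=n,n+1,\ldots,2n-1$ we have $\operatorname{rank}H_{2j}=2n-\tfrac{2j}{2}=2n-j$, giving $2n-j$ cells of dimension $2j$. The top nonvanishing homology sits in degree $q=2n+2(n-1)=2(2n-1)$, where the rank equals $1$; hence the complex is $2(2n-1)$-dimensional. Finiteness is immediate, since the total number of cells is the finite sum $\sum_{j=0}^{n-1}(j+1)+\sum_{j=n}^{2n-1}(2n-j)$.

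I do not anticipate a genuine obstacle at this stage: the entire analytic content lies in the spectral sequence computation of the preceding Lemma, in particular the collapse of the sequence at the $E^2$-page, and the present statement is essentially a formal consequence of that computation. The only point requiring care is verifying that the hypotheses of Hatcher 4C.1 are genuinely met, namely freeness of the homology and simple-connectivity; once these are confirmed, the asserted cell structure, its dimension, and its finiteness are all forced.
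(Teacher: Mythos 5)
Your proposal is correct and follows exactly the paper's own route: the paper likewise notes simple-connectivity of $F(\mathbb{CP}^n,2)$ (via the fibration with base $\mathbb{CP}^n$ and fibre $\mathbb{CP}^{n-1}$), combines it with the free, even-degree homology from the preceding Lemma, and cites Hatcher's Proposition 4C.1 to read off the minimal cell structure. Your write-up merely makes explicit the bookkeeping of ranks and the top dimension $2(2n-1)$ that the paper leaves implicit.
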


The multiplicative structure of the cohomological algebra of the configuration space $F(\mathbb{C}P^n,2)$ was given by (\cite{sohail2010cohomology}, Theorem 2, pg. 412):
  
  \[H^\star(F(\mathbb{CP}^n,2);\mathbb{C})=\dfrac{\mathbb{C}[a_1,a_2]}{\langle r_n(a_1,a_2);a_1^{n+1};a_2^{n+1}\rangle},\] where $deg (a_1)=deg (a_2)=2$ and $r_n(x,y)=x^n+x^{n-1}y+\cdots +y^n$. Thus, we can conclude $a_1^na_2^n=0$ and $a_1^{n-1}a_2^n\neq 0$, since $a_1^na_2^n=r_n(a_1,a_2)a_2^n=0$ and $a_1^{n-1}a_2^n$ is a unique (up to sign) generator of $H^{4n-2}=\mathbb{C}$.

\begin{lem}\label{lem3}
\[cup_{\mathbb{C}}(F(\mathbb{CP}^n,2))=2n-1.\]
\end{lem}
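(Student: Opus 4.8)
The plan is to read off both a lower and an upper bound directly from the presentation of the ring $H^\star(F(\mathbb{CP}^n,2);\mathbb{C})$ recalled above. By definition, the equality $cup_{\mathbb{C}}(F(\mathbb{CP}^n,2))=2n-1$ amounts to two assertions: (i) some $(2n-1)$-fold cup product of classes in positive (reduced) degree is nonzero, and (ii) every $2n$-fold cup product of such classes vanishes. I would establish (i) and (ii) separately.

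For the lower bound (i), I would simply invoke the class $a_1^{n-1}a_2^n$. Since $\deg a_1=\deg a_2=2>0$, this is literally the $(2n-1)$-fold product of the $n-1$ copies of $a_1$ and the $n$ copies of $a_2$, all lying in $\widetilde{H^2}$. As already recorded in the excerpt, $a_1^{n-1}a_2^n$ is a nonzero generator of $H^{4n-2}(F(\mathbb{CP}^n,2);\mathbb{C})\cong\mathbb{C}$. Hence there is a nonvanishing $(2n-1)$-fold product, so $cup_{\mathbb{C}}(F(\mathbb{CP}^n,2))\geq 2n-1$.

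For the upper bound (ii), I would argue at the level of monomials. Every class in $\widetilde{H^\star}$ is a $\mathbb{C}$-linear combination of monomials $a_1^i a_2^j$ with $i+j\geq 1$, so any $2n$-fold cup product expands as a $\mathbb{C}$-linear combination of monomials $a_1^I a_2^J$ of weight $I+J\geq 2n$. It therefore suffices to show that each such monomial vanishes. If $I\geq n+1$ or $J\geq n+1$, this is immediate from $a_1^{n+1}=a_2^{n+1}=0$. Otherwise $I,J\leq n$ together with $I+J\geq 2n$ forces $I=J=n$, and $a_1^n a_2^n=0$ by the computation $a_1^n a_2^n=r_n(a_1,a_2)a_2^n=0$ recalled above. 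Thus every $2n$-fold product is zero, giving $cup_{\mathbb{C}}(F(\mathbb{CP}^n,2))\leq 2n-1$, and combining with the lower bound completes the proof.

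The only point requiring genuine care, rather than the power relations alone, is the borderline monomial $a_1^n a_2^n$: it is not killed by $a_1^{n+1}=a_2^{n+1}=0$ and must instead be eliminated using the relation $r_n(a_1,a_2)=0$. This is precisely the step that makes the answer $2n-1$ rather than $2n$, so I would present it explicitly rather than absorbing it into the monomial bookkeeping.
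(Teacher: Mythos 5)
Your proof is correct and follows essentially the same route as the paper: the paper's own (one-line) proof simply notes that $a_1^{n-1}a_2^n\neq 0$ and $a_1^na_2^n=0$, which are exactly the two facts your lower and upper bounds rest on. You merely make explicit the monomial bookkeeping (using $a_1^{n+1}=a_2^{n+1}=0$ and singling out the borderline case $I=J=n$) that the paper leaves implicit.
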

\begin{proof}
 We just have to note that $a_1^na_2^n=0$ and $a_1^{n-1}a_2^n\neq 0$.  
\end{proof}

\noindent \textit{Proof of Theorem \ref{theor}.}

It follows using Corrollary \ref{coro2},  Lemma \ref{lem3} and Proposition \ref{prop 1}.
 
\begin{flushright}
 $\square$
 \end{flushright}

\noindent \textit{Proof of corollary \ref{coro}.}   
 
 Since $F(\mathbb{CP}^n,2)$ is path-connected and paracompact, the inequality \[TC(F(\mathbb{CP}^n,2))\leq 4n-1\] follows from Theorem \ref{theor} and (\cite{farber2003topological}, Section 3, Theorem 5, pg. 215). On the other hand, $1\otimes a_1-a_1\otimes 1$ and $1\otimes a_2-a_2\otimes 1\in H^\star(F(\mathbb{CP}^n,2);\mathbb{C})\otimes H^\star(F(\mathbb{CP}^n,2);\mathbb{C})$ are zero-divisors whose $(2n-1)-$th power
 \begin{eqnarray*}
 (1\otimes a_1-a_1\otimes 1)^{2n-1} &=& pa_1^{n-1}\otimes a_1^n+qa_1^{n}\otimes a_1^{n-1}; \\
 (1\otimes a_2-a_2\otimes 1)^{2n-1} &=& pa_2^{n-1}\otimes a_2^n+qa_2^{n}\otimes a_2^{n-1},
 \end{eqnarray*} where $p=(-1)^{n-1}{2n-1 \choose n-1} $ and $q=(-1)^{n}{2n-1 \choose n} $.
 
 Thus, we have
 \begin{equation*}
     (1\otimes a_1-a_1\otimes 1)^{2n-1}(1\otimes a_2-a_2\otimes 1)^{2n-1} = 2p^2 a_1^{n-1}a_2^n\otimes a_1^{n-1}a_2^n
 \end{equation*} does not vanish. The opposite inequality \[TC(F(\mathbb{CP}^n,2))\geq 4n-1\] now follows from (\cite{farber2003topological}, Theorem 7).
 
\begin{flushright}
 $\square$
 \end{flushright}
 
\begin{rem}
 Corollary \ref{coro} in the case $n=1$ also was calculated by Michael Farber and Daniel Cohen in (\cite{cohen2011topological}, Theorem A).
 \end{rem}
 
 \begin{rem}
 Theorem \ref{theor} shows that the configuration space $F(\mathbb{CP}^n,2)$ satisfies the Ganea's conjecture, because $cat(F(\mathbb{CP}^n,2))=cup_{\mathbb{C}}(F(\mathbb{CP}^n,2))+1$. 
 \end{rem}
 
 \begin{rem}
  By (\cite{farber2003topologicalproj}, Corollary 3.2) we have \begin{equation}
      TC(M)=dim(M)+1\label{symplectic}
  \end{equation}  when $M$ is a closed simply connected symplectic manifold. Corollary \ref{coro} shows that the analogous statement (\ref{symplectic}) for non compact cases does not hold.
 \end{rem}
 
 \begin{rem}
 We will compare the result stated in Corollary \ref{coro} with the topological complexity of the Cartesian product $\mathbb{CP}^n\times \mathbb{CP}^n$. By (\cite{farber2003topologicalproj}, Corollary 3.2) we have \begin{equation*}
 TC(\mathbb{CP}^n\times \mathbb{CP}^n) = 4n+1.
 \end{equation*} Thus, on the complex projective space $\mathbb{CP}^n$, the complexity of the collision-free motion planning problem for $2$ robots is \textit{less} complicated than the complexity of the similar problem when the robots are allowed to collide.  This example also\footnote{This phenomenon occurs also on a surface of high genus (see \cite{cohen2011topological}).} provides an illustration of the fact that the concept $TC(X)$ reflects only the \textit{topological} complexity, which is just a part of the \textit{total} complexity of the problem.
\end{rem}

\begin{rem}
 We note that the configuration space $F(\mathbb{CP}^n,2)$ is the space of all lines in the complex projective space $\mathbb{CP}^n$, since two points in $\mathbb{CP}^n$ generate a subspace of dimension 1. More general, \cite{berceanu2012braid} the ordered configuration space $F(\mathbb{CP}^n,k)$ has a stratification with complex submanifolds as follows:
\[F(\mathbb{CP}^n,k)=\coprod_{i=1}^{n}F^i(\mathbb{CP}^n,k),\]
where $F^i(\mathbb{CP}^n,k)$ is the ordered configuration space of all $k$ points in $\mathbb{CP}^n$ generating a subspace of dimension $i$.
\end{rem}

\begin{rem}
There is no discussion of what might happen for more than two points. Thus, it is interesting to calculate the TC for the ordered configuration space $F(\mathbb{CP}^n,k)$  when $k\geq 3$. In general,  calculate the TC for the ordered configuration space $F(V,k)$ where $V$ is a smooth complex projective variety.
\end{rem}

\noindent \textbf{Acknowledgments} I am very grateful to Jesús González and my advisor Denise de Mattos for their comments and encouraging remarks which were of invaluable mental support. Also, the author wishes to acknowledge support for this research, from FAPESP 2016/18714-8.

\end{document}